\documentclass{amsart}

\usepackage{mathtools}
\usepackage{rotating}
\usepackage{amssymb}
\usepackage{tikz}
\usetikzlibrary{calc}
\usetikzlibrary{decorations}
\usepackage{todonotes}
\usepackage{hyperref}
\usepackage{stackengine}
\usepackage{mathabx}

\hypersetup {
    colorlinks = true,
    linktoc = all,
    linkcolor = blue,
    citecolor = red
}

\newtheorem{theorem}{Theorem}[section]
\newtheorem{lemma}[theorem]{Lemma}

\newtheorem{question}[theorem]{Question}

\theoremstyle{definition}

\newtheorem{example}[theorem]{Example}

\theoremstyle{remark}

\numberwithin{equation}{section}

\begin{document}

\title[Reddening Sequences and Mutation of infinite quivers]{Reddening Sequences and Mutation of infinite quivers}


\author{Eric Bucher}
\address{Department of Mathematics, Xavier University, Cincinnati, OH 45207 }
\email{buchere1@xavier.edu}

\author{Elizabeth Howard}
\address{Department of Mathematics, Xavier University, Cincinnati, OH 45207 }
\email{howarde6@xavier.edu}





\begin{abstract}
Cluster algebras, introduced by Fomin and Zelevinsky \cite{cluster1} through the process of quiver mutation, have become central objects in modern algebra and geometry, linking combinatorial constructions with diverse mathematical domains such as Teichmüller theory, total positivity, and even theoretical physics. Building on foundational work by Fomin, Shapiro, and Thurston \cite{F-S-T} connecting cluster algebras to triangulated surfaces, recent research has extended mutation theory to infinite settings, including the infinity-gon and more general marked surfaces. In this paper, we develop a purely combinatorial framework for mutation of infinite quivers, independent of but compatible with these topological constructions. By formalizing infinite quivers as limits of embedded finite quivers, we establish a consistent definition of mutation that generalizes prior surface-based results. We then apply this framework to extend the notion of reddening sequences—special mutation sequences with significant algebraic consequences—from the finite to the infinite setting. Our approach not only unifies previous topological and combinatorial perspectives but also provides a technical foundation for further generalizations of cluster algebra theory in the infinite case.
\end{abstract}

\maketitle

\section{Introduction}

Quiver mutation was introduced by Fomin and Zelevinsky \cite{cluster1} as a way to create a combinatorial object called a cluster algebra. The cluster algebra is given by a quiver along with a set of variables attached to each vertex called a seed, $(Q,\mathbf{x})$. Mutation gives a process of creating a new quiver and seed pair from the previous. Applying mutation in all possible directions and taking the union of all of these seeds provides a generating set for what is called the cluster algebra associated to the pair $(Q,\mathbf{x})$. In the last twenty years, cluster algebras have proven to be extremely useful and varied when considering different fields of mathematics. They have applications to totally non-negative matrices, Teichmüller theory, scattering diagrams, and connections to predicting results of high-energy particle collisions, to name a few.

 Fomin, Shapiro and Thurston \cite{F-S-T} considered cluster algebras that arise from triangulations of marked surfaces in the finite setting. They classified these cluster algebras early on, giving a framework for a very important family. The algebras in question have many desirable properties and applications. In this topological landscape, the researchers began their exploration as to what mutation would look like for a quiver that had infinite vertices and infinite edges. Ndouné \cite{ndoune} was the first person to do this, for what they called the infinity-gon. Ndouné considered a surface with one boundary component with an infinite number of marked points. This work was extended in the works of Baur and Gratz \cite{Baur-Gratz} and further more by Çanaçki and Felikson \cite{C-F} to more general surfaces. In the case of Baur and Gratz, they considered additional boundary components; in the Çanaçki and Felikson work, they moved to the fully generalized surfaces setting. The most recent work is by Çanaçki, Kalck, and Pressland \cite{C-K-P} which considers the categorifications of the infinity-gon. All of this work paints a unified picture of what mutation should look like when considering this from the topological perspective. 

In this paper, we seek to use this topological framework to motivate a purely combinatorial approach to mutation of infinite quivers. That is to say, we want a general definition of an infinite quiver and mutation of such an object so that in the case where that quiver does arise from a topological setting, it is consistent with the work mentioned above. This would allow a generalization of the wealth of work to be extended from the generalized cluster algebras from surfaces setting to the arbitrary infinite quiver setting. We see the technical aspects of this paper to be a useful tool for others to generalize their results.

In section 3, we will present some examples of infinite quivers and what we would expect mutation to look like in those scenarios. Then we will formally define infinite quivers using a sequence of embedded finite quivers and subsequently mutation on an infinite quiver. This will provide the technical results necessary to show that this definition is consistent. 

In section 4, we will consider an application of these ideas. Reddening sequences were first brought to us in the context of cluster algebras by Keller \cite{keller} and then investigated by Brüstle, Dupont and Pérotin \cite{B-D-P} in their paper \emph{On Maximal Green Sequences}. These are a special sequence of mutations that, in the finite setting, guarantee your cluster algebra has certain desirable characteristics. In general, the exploration of reddening sequences has been well studied by many groups of researchers: 

\begin{itemize}
    \item Muller \cite{Muller} explored the connection between reddening sequences and mutation equivalent quivers;
    \item Garver and Musiker \cite{G-M} considered type $A$ cluster algebras;
    \item Gonchorov and Shen \cite{shen} found reddening sequences for cluster algebras from surfaces;
    \item B., Mills \cite{bucher}, \cite{bucher-mills} looked at cluster algebras from surfaces;
    \item B., M. \cite{banff}, \cite{cp} explored connections to reddening sequences and Banff quivers.
\end{itemize}

We will extend this notion of reddening sequences to infinite quivers, providing a definition consistent with the finite setting. This definition is very functional, as any infinite quiver with a reddening sequences results in a family of reddening sequences for each finite subquiver. We will then prove some results about families of infinite quivers, which do in fact have infinite reddening sequences. 

But before we do any of that work, section 2 will now provide the background on quiver mutation and reddening sequences we will need for the later sections.

\section{Background: Quiver mutation and reddening sequences}\label{sec:mu}
A \textbf{\emph{quiver}} is a pair $Q = (V,E)$ where $V$ is a set of \emph{vertices} and $E$ is mulitset of \textbf{\emph{arrows}} between two distinct vertices.
That is, if $i \to j$ for $i,j \in V$ is an arrow then we require that $i \neq j$ (i.e. no loops).
Furthermore, we do not allow both $i \to j \in E$ and $j \to i \in E$ (i.e. no $2$-cycles).
Let us point out that $E$ is a multiset, meaning multiple arrows between two particular vertices is permitted. 

The \textbf{\emph{framed quiver}} associated to $Q$, denoted $\widehat{Q}$, is the quiver where the vertex set and edge set are the following: $V(\widehat{Q}):=V(Q)\sqcup\{i'\space|\space i\in V(Q)\}$ and $E(\widehat{Q}):=E(Q)\sqcup\{i\rightarrow i'\space |\space i\in V(Q)\}$. Similarly, the \textbf{\emph{coframed quiver}} associated to $Q$, denoted $\widecheck{Q}$, is the quiver where the vertex set and edge set are the following: $V(\widecheck{Q}):=V(Q)\sqcup\{i'\space|\space i\in V(Q)\}$ and $E(\widecheck{Q}):=E(Q)\sqcup\{i'\rightarrow i\space |\space i\in V(Q)\}$. We say that the vertices $i\in V(Q)$ are the \textbf{\emph{mutable vertices}}, while the vertices $i'$ are the frozen vertices. Note that mutation is not allowed at a frozen vertex. 

We are now ready to define the process of quiver mutation which is a combinatorial ingredient in Fomin and Zelevinsky’s cluster algebras~\cite{cluster1}. 
The \textbf{\emph{mutation}} of a quiver $Q$ at a mutable vertex $k$ is denoted $\mu_k(Q)$ and is obtained by
\begin{enumerate}
    \item adding an arrow $i \to j$ for any $2$-path of vertices $i \to k \to j$,
    \item reverse all arrows incident to $k$,
    \item delete a maximal collection of $2$-cycles as well as any arrow between frozen vertices.
\end{enumerate}

A mutable vertex is \textbf{\emph{green}} if there are no incident incoming arrows from frozen vertices. Likewise, a mutable vertex is \textbf{\emph{red}} if there are no incident outgoing arrows to frozen vertices. Notice that all mutable vertices in framed quiver $\widehat{Q}$ are green. A \textbf{\emph{reddening sequence}} is a sequence of mutations that takes all mutable vertices from green to red. 

\section{Infinite Quivers and Mutation}

In this section, we start by using an example from the topological framework provided by Çanaçki and Felikson \cite{C-F} to motivate the resulting formal definition of infinite quivers and mutation. Figure \ref{fig:surface} shows a triangulation of a surface resulting in an infinite fan taken from Çanaçki and Felikson \cite{C-F}. This is one of two ways a limit arc can arise, and we will use this example to illustrate how we want infinite quivers and mutation to behave. The natural and obvious quiver that arises from the triangulated surface on the left of Figure \ref{fig:surface} is pictured in the top of Figure \ref{fig:surfacequiver}, denoted $Q_\infty$. This quiver is an infinite path in one direction. When an arc is flipped in the triangulation, as seen on the right of Figure \ref{fig:surface}, the quiver that arises from this resulting surface is shown on the bottom of Figure \ref{fig:surfacequiver}. It seems that mutation of $Q_\infty$ at vertex $3$ results in the quiver at the bottom of Figure \ref{fig:surfacequiver}. The goal of this section is to remove the need to consider the topological object at all, and just perform mutation on the infinite quiver itself. This can cause some technical issues, thus we need a careful and formal definition for both an infinite quiver and mutation on this mathematical object.

\begin{figure}[h!]
\centering

\begin{minipage}{0.45\textwidth}
\centering
\begin{tikzpicture}[scale=1]

    \coordinate (A) at (0,4);
    \fill (A) circle (3pt);

    \coordinate (B1) at (-2.8,0);
    \coordinate (B2) at (-2.1,0);
    \coordinate (B3) at (-1.05,0);
    \coordinate (B4) at (0,0);
    \coordinate (B5) at (0.84,0);

    \coordinate (F1) at (1.3,0.4);
    \coordinate (F2) at (1.82,0.4);

    \coordinate (Copen) at (2.52,0);

    \fill[cyan!20] (B1) -- (A) -- (Copen) -- cycle;

    \draw[blue,thick] (A) -- (B1);
    \draw[blue,thick] (A) -- (B2);
    \draw[blue,thick] (A) -- (B3);
    \draw[blue,thick] (A) -- (B4);
    \draw[blue,thick] (A) -- (B5);

    \draw[blue,thick] (A) -- (F1);
    \draw[blue,thick] (A) -- (F2);

    \draw[blue,thick,dashed] (A) -- (Copen);

    \node at (1.6,0.3) {\textbf{\dots}};

    \foreach \P in {B1,B2,B3,B4,B5}
        \fill (\P) circle (2pt);

    \draw (Copen) circle (3pt);

\end{tikzpicture}
\end{minipage}
\hfill
\begin{minipage}{0.45\textwidth}
\centering
\begin{tikzpicture}[scale=1]

    \coordinate (A) at (0,4);
    \fill (A) circle (3pt);
    \coordinate (B1) at (-2.8,0);
    \coordinate (B2) at (-2.1,0);
    \coordinate (B3) at (-1.05,0);
    \coordinate (B4) at (0,0);
    \coordinate (B5) at (0.84,0);
    \coordinate (F1) at (1.3,0.4);
    \coordinate (F2) at (1.82,0.4);
    \coordinate (Copen) at (2.52,0);

    \fill[cyan!20] (B1) -- (A) -- (Copen) -- cycle;
    \draw[blue,thick] (A) -- (B1);
    \draw[blue,thick] (A) -- (B2);
    \draw[blue,thick] (A) -- (B4);
    \draw[blue,thick] (A) -- (B5);
    \draw[blue,thick] (A) -- (F1);
    \draw[blue,thick] (A) -- (F2);
    \draw[blue,thick] (B2) to[out=60, in=120] (B4);
    \draw[blue,thick,dashed] (A) -- (Copen);
    \node at (1.6,0.3) {\textbf{\dots}};
    \foreach \P in {B1,B2,B3,B4,B5}
        \fill (\P) circle (2pt);
    \draw (Copen) circle (3pt);

\end{tikzpicture}
\end{minipage}

\caption{The left shows a triangulated surface where the limit arc arises in an infinite fan, while the right shows this triangulation with a singular flipped arc.}
\label{fig:surface}
\end{figure}

\begin{figure}
    \centering
    \footnotesize
    \begin{tikzpicture}[scale=0.3]
        \node (1) at (0,0) {$1$};
        \node (2) at (3,0) {$2$};
        \node (3) at (6,0) {$3$};
        \node (4) at (9,0) {$4$};
        \node (5) at (12,0) {$5$};  
        \node (6) at (15,0) {$\cdots$};
        \node (7) at (-4,0) {$Q_\infty$:};
        
        \draw[-{latex}] (1) to (2);
        \draw[-{latex}] (2) to (3);
        \draw[-{latex}] (3) to (4);
        \draw[-{latex}] (4) to (5);
        \draw[-{latex}] (5) to (6);

        \node (11) at (0,-2) {$1$};
        \node (22) at (3,-2) {$2$};
        \node (33) at (6,-2) {$3$};
        \node (44) at (9,-2) {$4$};
        \node (55) at (12,-2) {$5$};  
        \node (66) at (15,-2) {$\cdots$};
        \node (77) at (-4,-2) {$Q_\infty'$:};

        \draw[-{latex}] (11) to (22);
        \draw[-{latex}] (33) to (22);
        \draw[-{latex}] (44) to (33);
        \draw[-{latex}, bend left = 25] (22) to (44);
        \draw[-{latex}] (44) to (55);
        \draw[-{latex}] (55) to (66);
    \end{tikzpicture}
    \caption{The infinite quiver $Q_\infty$ arises from the triangulation of the surface on the left of Figure \ref{fig:surface}, and $Q_\infty '$ arises from the triangulation of the surface on the right of the same figure.}
    \label{fig:surfacequiver}
\end{figure}

\subsection{Infinite quivers.}

We define an \textbf{\emph{infinite quiver}} as a sequence of quivers $\{Q_i\}$ with a sequence of embeddings $\{\tau_i\}$ such that $Q_i$ is an induced subquiver of $Q_{i+1}$ for all $i$. We denote this $Q_\infty$ and we say $Q_\infty$ is the limit of $\{Q_i, \tau_i\}$. A vertex $v$ is in $Q_\infty$ when $v$ is in $Q_i$ for some $i$. Similarly, we say an arrow $x\rightarrow y $ is in $Q_\infty$ when $x\rightarrow y $ is in $Q_i$ for some $i$. It is often easier to visualize this in the following way:

$$Q_{\infty} := Q_1\xhookrightarrow[\tau_1]{}  Q_2 \xhookrightarrow[\tau_2]{} Q_3\xhookrightarrow[\tau_3]{} \cdots \xhookrightarrow[\tau_{i-1}]{} Q_i \xhookrightarrow[\tau_i]{} \cdots$$

We consider the following two examples in Figures \ref{fig:inf1} and \ref{fig:inf2} of infinite quivers. These examples illustrate how it is important to not only consider the sequence of quivers, $\{Q_i\}$, but also the embeddings $\{\tau_i\}$ as these infinite quivers are distinct yet have isomorphic quiver sequences.

\begin{figure}
    \centering
        \footnotesize
        \begin{tikzpicture}[scale=0.3]
        \node (1) at (0,8) {$1$};
        \node (2) at (0,6) {$1$};
        \node (3) at (3,6) {$2$};
        \node (4) at (0,4) {$1$};
        \node (5) at (3,4) {$2$};
        \node (6) at (6,4) {$3$};
        \node (7) at (0,2) {$1$};
        \node (8) at (3,2) {$2$};
        \node (9) at (6,2) {$3$};
        \node (10) at (9,2) {$4$};
        \node (11) at (0,0) {$1$};
        \node (12) at (3,0) {$2$};
        \node (13) at (6,0) {$3$};
        \node (14) at (9,0) {$4$};
        \node (15) at (12,0) {$5$};  
        \node (16) at (7.5,-2) {$\vdots$};
        \node (17) at (0,-4) {$1$};
        \node (18) at (3,-4) {$2$};
        \node (19) at (6,-4) {$3$};
        \node (20) at (9,-4) {$4$};
        \node (21) at (12,-4) {$5$};
        \node (22) at (15,-4) {$\cdots$};
        \node (23) at (18,-4) {$i$};
        \node (24) at (7.5,-6) {$\vdots$};
        \node (25) at (-3,8) {$Q_1$:};
        \node (26) at (-3,6) {$Q_2$:};
        \node (27) at (-3,4) {$Q_3$:};
        \node (28) at (-3,2) {$Q_4$:};
        \node (29) at (-3,0) {$Q_5$:};
        \node (29) at (-3,-4) {$Q_i$:};
        \node (30) at (-3,-8) {$Q_\infty$:};
        \node (31) at (0,-8) {$1$};
        \node (32) at (3,-8) {$2$};
        \node (33) at (6,-8) {$3$};
        \node (34) at (9,-8) {$4$};
        \node (35) at (12,-8) {$5$};
        \node (36) at (15,-8) {$\cdots$};
    
        \draw[-{latex}] (2) to (3);
    
        \draw[-{latex}] (4) to (5);
        \draw[-{latex}] (5) to (6);
    
        \draw[-{latex}] (7) to (8);
        \draw[-{latex}] (8) to (9);
        \draw[-{latex}] (9) to (10);

        \draw[-{latex}] (11) to (12);
        \draw[-{latex}] (12) to (13);
        \draw[-{latex}] (13) to (14);
        \draw[-{latex}] (14) to (15);
        
        \draw[-{latex}] (17) to (18);
        \draw[-{latex}] (18) to (19);
        \draw[-{latex}] (19) to (20);
        \draw[-{latex}] (20) to (21);
        \draw[-{latex}] (21) to (22);
        \draw[-{latex}] (22) to (23);

        \draw[-{latex}] (31) to (32);
        \draw[-{latex}] (32) to (33);
        \draw[-{latex}] (33) to (34);
        \draw[-{latex}] (34) to (35);
        \draw[-{latex}] (35) to (36);
        \end{tikzpicture}
    \caption{The sequence of quivers $Q_\infty=\{Q_i\}$ with $\tau_i(k)=k$.}
    \label{fig:inf1}
\end{figure}

\begin{figure}
    \centering
    \footnotesize
        \begin{tikzpicture}[scale=0.3]
        \node (1) at (0,8) {$0$};
        \node (2) at (0,6) {$0$};
        \node (3) at (3,6) {$1$};
        \node (4) at (0,4) {$-1$};
        \node (5) at (3,4) {$0$};
        \node (6) at (6,4) {$1$};
        \node (7) at (0,2) {$-1$};
        \node (8) at (3,2) {$0$};
        \node (9) at (6,2) {$1$};
        \node (10) at (9,2) {$2$};
        \node (11) at (0,0) {$-2$};
        \node (12) at (3,0) {$-1$};
        \node (13) at (6,0) {$0$};
        \node (14) at (9,0) {$1$};
        \node (15) at (12,0) {$2$};  
        \node (16) at (6,-2) {$\vdots$};
        \node (25) at (-3,8) {$Q'_1$:};
        \node (26) at (-3,6) {$Q'_2$:};
        \node (27) at (-3,4) {$Q'_3$:};
        \node (28) at (-3,2) {$Q'_4$:};
        \node (29) at (-3,0) {$Q'_5$:};
        \node (30) at (-3,-4) {$Q_\infty'$:};
        \node (31) at (0,-4) {$\cdots$};
        \node (32) at (3,-4) {$-1$};
        \node (33) at (6,-4) {$0$};
        \node (34) at (9,-4) {$1$};
        \node (35) at (12,-4) {$2$};
        \node (36) at (15,-4) {$\cdots$};
    
        \draw[-{latex}] (2) to (3);
    
        \draw[-{latex}] (4) to (5);
        \draw[-{latex}] (5) to (6);
    
        \draw[-{latex}] (7) to (8);
        \draw[-{latex}] (8) to (9);
        \draw[-{latex}] (9) to (10);

        \draw[-{latex}] (11) to (12);
        \draw[-{latex}] (12) to (13);
        \draw[-{latex}] (13) to (14);
        \draw[-{latex}] (14) to (15);

        \draw[-{latex}] (31) to (32);
        \draw[-{latex}] (32) to (33);
        \draw[-{latex}] (33) to (34);
        \draw[-{latex}] (34) to (35);
        \draw[-{latex}] (35) to (36);
        \end{tikzpicture}
    \caption{The infinite quiver $Q'_\infty$ is not isomorphic to $Q_\infty$ even though each individual $Q_i$ is isomorphic to $Q_i'$.}
    \label{fig:inf2}
\end{figure}

\subsection{Mutation of infinite quivers.}
We now want to define an analogue of mutation for infinite quivers. Let $k$ be a vertex in $Q_\infty$. Let $j$ be the minimal value such that $k\in Q_j$. Then, we define \textbf{\emph{mutation of $Q_{\infty}$ at vertex $k$}} as: $$\mu_k(Q_\infty): = \mu_k(Q_j)|_{V(Q_1)}, \mu_k(Q_j)|_{V(Q_2)}, \dots, \mu_k(Q_j)|_{V(Q_{j-1})}, \mu_k(Q_j), \mu_k(Q_{j+1}), \dots$$

Note that the resulting quiver uses the same sequence of embeddings $\{\tau_i\}$, i.e. the embedding map of the resulting mutated quiver is of the form $$\mu_k(Q_j)|_{V(Q_1)} \xhookrightarrow[\tau_1]{} \cdots \xhookrightarrow[\tau_{j-1}]{} \mu_k(Q_j)|_{V(Q_{i+1})} \xhookrightarrow[\tau_j]{} \mu_k(Q_j) \xhookrightarrow[\tau_{j}]{} \mu_k(Q_{j+1}) \xhookrightarrow[\tau_{j+1}]{} \cdots$$

We now demonstrate that this is properly defined and show that the result of mutation is an infinite quiver.

\begin{theorem}
   For an infinite quiver $Q_{\infty}$, we have that $\mu_k(Q_\infty)$ is also an infinite quiver. 
\end{theorem}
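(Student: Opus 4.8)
The plan is to check directly that the data defining $\mu_k(Q_\infty)$ satisfies the definition of an infinite quiver: namely, that in the sequence
$$\mu_k(Q_j)|_{V(Q_1)},\ \dots,\ \mu_k(Q_j)|_{V(Q_{j-1})},\ \mu_k(Q_j),\ \mu_k(Q_{j+1}),\ \dots$$
each term is an induced subquiver of the next, with the original embeddings $\{\tau_i\}$ still exhibiting these relations. I would split the verification into three regimes according to the index $i$: the ``head'' $i \le j-2$, the junction $i = j-1$, and the ``tail'' $i \ge j$.

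The head and junction require nothing about mutation. I would record the elementary fact that if $A \subseteq B$ are subsets of the vertex set of a quiver $R$, then $R|_A$ is an induced subquiver of $R|_B$. Applying this to $R = \mu_k(Q_j)$ and the nested vertex sets $V(Q_1) \subseteq V(Q_2) \subseteq \cdots \subseteq V(Q_j)$ (identified along the $\tau$'s) handles all $i \le j-1$ at once, the case $i = j-1$ being exactly that $\mu_k(Q_j)|_{V(Q_{j-1})}$ is an induced subquiver of $\mu_k(Q_j)$. So $\tau_1,\dots,\tau_{j-1}$ still serve as the required embeddings.

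The content of the theorem is the tail. Since $k \in Q_j \subseteq Q_i$ for every $i \ge j$, it suffices to establish the commutation statement: \emph{if $P$ is an induced subquiver of $Q$ and $k \in V(P)$, then $\mu_k(P) = \mu_k(Q)|_{V(P)}$}; applying it with $(P,Q) = (Q_i, Q_{i+1})$ then shows $\mu_k(Q_i)$ is an induced subquiver of $\mu_k(Q_{i+1})$ via $\tau_i$. I would prove the commutation statement by running through the three steps of the mutation rule for a fixed pair $a,b \in V(P)$ and comparing the arrow multiset between $a$ and $b$ obtained in $\mu_k(P)$ with the one obtained in $\mu_k(Q)$. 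In step (1) the new arrows $a \to b$ correspond to $2$-paths $a \to k \to b$; because $P$ is induced and $k \in V(P)$, the multiset of arrows incident to $k$ among vertices of $P$ agrees in $P$ and in $Q$, so the same $2$-paths are counted. Step (2) only reverses arrows at $k$ and is manifestly compatible with restriction to $V(P)$. For step (3) the key observation is that a quiver with no $2$-cycles can, after steps (1)--(2), only have a $2$-cycle between two vertices that are distinct from $k$ (step (1) creates no arrow incident to $k$, and reversing the arrows at $k$ cannot create a $2$-cycle where there was none), so every $2$-cycle cancellation—and likewise every deletion of an arrow between frozen vertices—occurs between a pair of vertices and uses only the arrows between that pair. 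Since those arrows agree in $P$ and $Q$ when the pair lies in $V(P)$, the net result between $a$ and $b$ is the same, giving $\mu_k(P) = \mu_k(Q)|_{V(P)}$.

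I expect step (3) of the commutation statement to be the main obstacle: one must be sure that the ``maximal collection of $2$-cycles'' removed from $\mu_k(Q)$ cannot interact with vertices outside $V(P)$ in a way that changes the outcome inside $V(P)$. The locality of mutation at $k$—it alters only arrows whose endpoints lie in $\{k\}$ together with the neighbours of $k$—combined with the standing no-$2$-cycle hypothesis, is precisely what rules this out. Once the commutation statement is proved, the tail case is immediate, and assembling the three regimes shows that $\mu_k(Q_\infty)$, equipped with the embeddings $\{\tau_i\}$, is an infinite quiver.
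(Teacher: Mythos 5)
Your proposal is correct and follows the same route as the paper: split the chain at the index $j$, observe that the terms below $j$ are nested restrictions of the single quiver $\mu_k(Q_j)$, and reduce the terms at and above $j$ to the statement that mutation at $k$ commutes with passing to an induced subquiver containing $k$. The one substantive difference is that the paper disposes of that commutation in a single clause (``since mutation respects induced subquivers''), whereas you actually prove it, via the locality observation that the arrow multiset between a pair $a,b$ after $\mu_k$ is determined by the arrows among $\{a,b,k\}$ in the original quiver, and that no $2$-cycle can arise at $k$ itself, so the cancellation in step (3) is confined to pairs lying entirely in $V(P)$. That argument is sound and supplies exactly the justification the paper's proof leaves implicit; it costs you a paragraph but makes the tail case self-contained rather than resting on an unproved (if standard) lemma.
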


\begin{proof}
    Let $Q_\infty$ be an infinite quiver and let $k$ be a vertex in $Q_\infty$. Then define $\mu_k(Q_\infty)$ as above, with $j$ being the minimal value such that $k$ is in $Q_j$. For notation's sake, let $\mu_k(Q_\infty)_i:=\tilde{Q}_i$. We will show $\mu_k(Q_\infty)$ is an infinite quiver by showing $\tilde{Q}_i$ is an induced subquiver of $\tilde{Q}_{i+1}$ for all $i\in \mathbb{N}$. By definition, for all $i<j$, we have that $\tilde{Q}_i$ is an induced subquiver of $\tilde{Q}_{i+1}$. Now consider $i\geq j$. Note that by definition, $Q_t=Q_{t+1}|_{V(Q_t)}$ for all $t\in\mathbb{N}$. Hence, $\tilde{Q}_i=\mu_k(Q_i)=\mu_k(Q_{i+1}|_{V(Q_i)})$. Since mutation respects induced subquivers, we have that $\mu_k(Q_{i+1}|_{V(Q_i)}) = \mu_k(Q_{i+1})|_{V(Q_i)}$. Or in other words, $\tilde{Q}_i=\tilde{Q}_{i+1}|_{V(Q_i)}$. 
\end{proof}

\begin{figure}
    \centering
    \footnotesize
    \begin{tikzpicture}[scale=0.3]
        \centering
        \node (1) at (0,8) {$1$};
        \node (2) at (0,6) {$1$};
        \node (3) at (3,6) {$2$};
        \node (4) at (0,4) {$1$};
        \node (5) at (3,4) {$2$};
        \node (6) at (6,4) {$3$};
        \node (7) at (0,2) {$1$};
        \node (8) at (3,2) {$2$};
        \node (9) at (6,2) {$3$};
        \node (10) at (9,2) {$4$};
        \node (11) at (0,0) {$1$};
        \node (12) at (3,0) {$2$};
        \node (13) at (6,0) {$3$};
        \node (14) at (9,0) {$4$};
        \node (15) at (12,0) {$5$};  
        \node (16) at (7.5,-1.5) {$\vdots$};
        \node (17) at (0,-4) {$1$};
        \node (18) at (3,-4) {$2$};
        \node (19) at (6,-4) {$3$};
        \node (20) at (9,-4) {$4$};
        \node (21) at (12,-4) {$5$};
        \node (22) at (15,-4) {$\cdots$};
        \node (23) at (18,-4) {$i$};
        \node (24) at (7.5,-6) {$\vdots$};
        \node (25) at (-6,8) {$\mu_3(Q_3)|_{V(Q_1)}$:};
        \node (26) at (-6,6) {$\mu_3(Q_3)|_{V(Q_2)}$:};
        \node (27) at (-6,4) {$\mu_3(Q_3)$:};
        \node (28) at (-6,2) {$\mu_3(Q_4)$:};
        \node (29) at (-6,0) {$\mu_3(Q_5)$:};
        \node (29) at (-6,-4) {$\mu_3(Q_i)$:};

        \draw[-{latex}] (2) to (3);

        \draw[-{latex}] (4) to (5);
        \draw[-{latex}] (6) to (5);

        \draw[-{latex}] (7) to (8);
        \draw[-{latex}] (9) to (8);
        \draw[-{latex}, bend left = 30] (8) to (10);
        \draw[-{latex}] (10) to (9);

        \draw[-{latex}] (11) to (12);
        \draw[-{latex}] (13) to (12);
        \draw[-{latex}, bend left = 30] (12) to (14);
        \draw[-{latex}] (14) to (13);
        \draw[-{latex}] (14) to (15);

        \draw[-{latex}] (17) to (18);
        \draw[-{latex}] (19) to (18);
        \draw[-{latex}, bend left = 30] (18) to (20);
        \draw[-{latex}] (20) to (19);
        \draw[-{latex}] (20) to (21);
        \draw[-{latex}] (21) to (22);
        \draw[-{latex}] (22) to (23);
    \end{tikzpicture}
    \caption{The sequence $\{\mu_3(Q_\infty)\}$ from Figure \ref{fig:inf1}.}
    \label{fig:mu2}
\end{figure}

\section{Infinite Reddening Sequences}

In this section we start by defining the notion of \textbf{\textit{reddening sequences for an infinite quiver $Q_\infty$}} as an infinite or bi-infinite mutation sequence $\sigma$ such that the finite subsequence $\sigma|_{V(Q_i)}$ is a reddening sequence for $Q_i$ for all $i\geq 1$. 

The reddening sequence for $Q_\infty$ is a universal reddening sequence for the $Q_i$, in the sense that it encodes a family of reddening sequences for all of the finite quivers. Here are a few examples to help illustrate this concept.  

\begin{figure}
    \centering
    \scriptsize
        \begin{tikzpicture}[scale=0.3]
        \node (1) at (0,8) {$0$};
        \node (2) at (0,6) {$-1$};
        \node (3) at (3,6) {$0$};
        \node (4) at (6,6) {$1$};        
        \node (5) at (0,4) {$-2$};
        \node (6) at (3,4) {$-1$};
        \node (7) at (6,4) {$0$};
        \node (8) at (9,4) {$1$};
        \node (9) at (12,4) {$2$};
        \node (10) at (-4,8) {$Q_1$:};
        \node (11) at (-4,6) {$Q_2$:};
        \node (12) at (-4,4) {$Q_3$:};
        \node (13) at (0,0) {$\cdots$};
        \node (14) at (3,0) {$-2$};
        \node (15) at (6,0) {$-1$};
        \node (16) at (9,0) {$0$};
        \node (17) at (12,0) {$1$};
        \node (18) at (15,0) {$2$};
        \node (19) at (18,0) {$\cdots$};
        \node (20) at (9,2) {$\vdots$};
        \node (21) at (-4,0) {$Q_\infty$:};        
        \node (22) at (22,8) {$\sigma|_{V(Q_1)}=(0)$};
        \node (23) at (22,6) {$\sigma|_{V(Q_2)}=(-1,0,1)$};
        \node (24) at (22,4) {$\sigma|_{V(Q_3)}=(-2,-1,0,1,2)$};

        \draw[-{latex}] (2) to (3);
        \draw[-{latex}] (3) to (4);

        \draw[-{latex}] (5) to (6);
        \draw[-{latex}] (6) to (7);
        \draw[-{latex}] (7) to (8);
        \draw[-{latex}] (8) to (9);

        \draw[-{latex}] (13) to (14);
        \draw[-{latex}] (14) to (15);
        \draw[-{latex}] (15) to (16);
        \draw[-{latex}] (16) to (17);
        \draw[-{latex}] (17) to (18);
        \draw[-{latex}] (18) to (19);
        \end{tikzpicture}
    \caption{An infinite quiver with reddening sequence \\ $\sigma=(\dots,-2,-1,0,1,2,\dots)$.}
    \label{fig:red0}
\end{figure}

\begin{example} 
     Consider the infinite quiver in Figure \ref{fig:red0}. Here, we have the reddening sequence $\sigma=(\dots,-3,-2,-1,0,1,2,3,\dots)$, illustrating the need to allow a bi-infinite reddening sequence for infinite quivers. Notice that the infinite quiver itself is the same as in Figure \ref{fig:inf1}, but the sequence of infinite quivers are embedded differently. It allows the reddening sequence for each $Q_i$ to look like $\sigma|_{V(Q_i)}=(-i+1,\dots,i-1)$.  
\end{example}

\begin{example}
    Similarly, a reddening sequence for an infinite quiver is not required to be bi-infinite, as seen in Figure \ref{fig:red1}. Here, the sequence of finite quivers is embedded in such a way that allows for a singly infinite reddening sequence. The infinite quiver in Figure \ref{fig:red1} can be considered a star with two paths from the center. This form of infinite quiver can be generalized into stars with a finite amount of paths from the center. Consider Figure \ref{fig:red2}, a star with three paths from the center. The reddening sequence for this infinite quiver resembles that of the quiver from Figure \ref{fig:red1} but with an added element for each layer. This general form can be used for a star with any finite number of paths from the center. 
\end{example}

\begin{figure}
    \centering
    \scriptsize
        \begin{tikzpicture}[scale=0.3]
        \node (1) at (0,8) {$0$};
        \node (2) at (0,6) {$-1$};
        \node (3) at (3,6) {$0$};
        \node (4) at (6,6) {$1$};        
        \node (5) at (0,4) {$-2$};
        \node (6) at (3,4) {$-1$};
        \node (7) at (6,4) {$0$};
        \node (8) at (9,4) {$1$};
        \node (9) at (12,4) {$2$};
        \node (10) at (-4,8) {$Q_1$:};
        \node (11) at (-4,6) {$Q_2$:};
        \node (12) at (-4,4) {$Q_3$:};
        \node (13) at (0,0) {$\cdots$};
        \node (14) at (3,0) {$-2$};
        \node (15) at (6,0) {$-1$};
        \node (16) at (9,0) {$0$};
        \node (17) at (12,0) {$1$};
        \node (18) at (15,0) {$2$};
        \node (19) at (18,0) {$\cdots$};
        \node (20) at (9,2) {$\vdots$};
        \node (21) at (-4,0) {$Q_\infty$:};        
        \node (22) at (22,8) {$\sigma|_{V(Q_1)}=(0)$};
        \node (23) at (22,6) {$\sigma|_{V(Q_2)}=(0,-1,1)$};
        \node (24) at (22,4) {$\sigma|_{V(Q_3)}=(0,-1,1,-2,2)$};

        \draw[-{latex}] (3) to (2);
        \draw[-{latex}] (3) to (4);

        \draw[-{latex}] (6) to (5);
        \draw[-{latex}] (7) to (6);
        \draw[-{latex}] (7) to (8);
        \draw[-{latex}] (8) to (9);

        \draw[-{latex}] (14) to (13);
        \draw[-{latex}] (15) to (14);
        \draw[-{latex}] (16) to (15);
        \draw[-{latex}] (16) to (17);
        \draw[-{latex}] (17) to (18);
        \draw[-{latex}] (18) to (19);
        \end{tikzpicture}
    \caption{An infinite quiver with reddening sequence \\ $\sigma=(0,-1,1,-2,2,-3,3,\dots)$.}
    \label{fig:red1}
\end{figure}

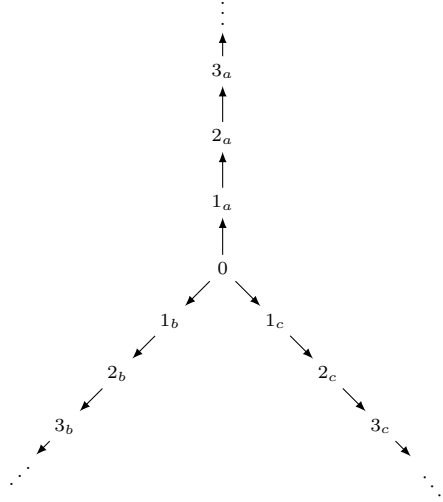
\begin{figure}
    \centering
    \scriptsize
    \begin{tikzpicture}[scale=0.35]
        \centering
        \node (0) at (0,0) {$0$};
        \node (1a) at (0,2.5) {$1_a$};
        \node (1b) at (-2,-2) {$1_b$};
        \node (1c) at (2,-2) {$1_c$};
        \node (2a) at (0,5) {$2_a$};
        \node (2b) at (-4,-4) {$2_b$};
        \node (2c) at (4,-4) {$2_c$};
        \node (3a) at (0,7.5) {$3_a$};
        \node (3b) at (-6,-6) {$3_b$};
        \node (3c) at (6,-6) {$3_c$};
        \node (4a) at (0,10) {$\vdots$};
        \node (4b) at (-8,-8) {\rotatebox[origin=c]{87}{$\ddots$}};
        \node (4c) at (8,-8) {\rotatebox[origin=c]{-7}{$\ddots$}};

        \draw[-{latex}] (0) to (1a);
        \draw[-{latex}] (1a) to (2a);
        \draw[-{latex}] (2a) to (3a);
        \draw[-{latex}] (3a) to (4a);

        \draw[-{latex}] (0) to (1b);
        \draw[-{latex}] (1b) to (2b);
        \draw[-{latex}] (2b) to (3b);
        \draw[-{latex}] (3b) to (4b);

        \draw[-{latex}] (0) to (1c);
        \draw[-{latex}] (1c) to (2c);
        \draw[-{latex}] (2c) to (3c);
        \draw[-{latex}] (3c) to (4c);
    \end{tikzpicture}
    \caption{An infinite quiver with infinite reddening sequence $\sigma=(0,1_a,1_b,1_c,2_a,2_b,2_c,3_a,3_b,3_c,\dots)$.}
    \label{fig:red2}
\end{figure}

We now consider a result which states the triangular extension construction preserves the existence of reddening sequences and maximal green sequences.
Let $Q_1$ and $Q_2$ be quivers. A \textbf{\emph{triangular extension}} of $Q_1$ and $Q_2$ is any quiver $Q$ with
\[V(Q) = V(Q_1) \sqcup V(Q_2)\]
\[E(Q) = E(Q_1) \sqcup E(Q_2) \sqcup E\]
where $E$ is any set of arrows such that either we have either
\begin{itemize}
\item[] for any $i \to j \in E$ implies $i \in V(Q_1)$ and $j \in V(Q_2)$
\end{itemize}
or else
\begin{itemize}
\item[] for any $i \to j \in E$ implies $i \in V(Q_2)$ and $j \in V(Q_1)$.
\end{itemize}
That is, a triangular extension of quivers simply takes the disjoint union of the two quivers then adds additional arrows between the quivers with the condition that all arrows are directed from one quiver to the other.
An example of a triangular extension of quivers $Q_1$ and $Q_2$ where $V(Q_1) = \{1,2,3\}$ and $V(Q_2) = \{4,5,6\}$ is given in Figure~\ref{fig:directsum}.

The following is a result by Cao and Li that will be utilized to prove our main result on infinite quivers: 

\begin{lemma}[{\cite[Theorem 4.5, Remark 4.6]{cao-li}}]
If $Q_1$ and $Q_2$ are any two quivers which both admit reddening (maximal green) sequences, then any triangular extension of $Q_1$ and $Q_2$ admits a reddening (maximal green) sequence.
\label{lem:directsum}
\end{lemma}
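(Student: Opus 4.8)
The plan is to build a reddening sequence for $Q$ by concatenating the given reddening sequences $\sigma_1$ for $Q_1$ and $\sigma_2$ for $Q_2$. After relabeling I may assume every arrow of the connecting set $E$ points from $V(Q_1)$ to $V(Q_2)$, and I claim that $\sigma:=\sigma_1\ast\sigma_2$ — first run $\sigma_1$ on the vertices of $V(Q_1)$, then $\sigma_2$ on the vertices of $V(Q_2)$ — is a reddening sequence for $Q$. All of the work takes place in the framed quiver $\widehat{Q}$, whose frozen set is $V(Q_1)'\sqcup V(Q_2)'$. I will invoke two standard facts: (a) because $\widehat{Q}$ has principal coefficients, the $c$-vector of every mutable vertex stays sign-coherent under any sequence of mutations (Derksen--Weyman--Zelevinsky; Gross--Hacking--Keel--Kontsevich); and (b) along a fixed mutation sequence the rows attached to frozen vertices — and, temporarily, to not-yet-mutated ones — transform linearly, by transition matrices built from the autonomously evolving principal part together with those $c$-vector signs, and for $\widehat{Q_i}$ these transition matrices are exactly the columns of its sign-coherent $c$-matrix, so that when $\sigma_i$ is reddening the total transition matrix is $-P$ for a permutation matrix $P$ (tropical duality, Nakanishi--Zelevinsky).

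In Phase $1$ we run $\sigma_1$ at the vertices of $V(Q_1)$. The crucial observation, via (b) applied with the vertices of $V(Q_2)$ playing the role of extra frozen vertices, is that a vertex $k\in V(Q_1)$ is \emph{green} at a given moment of $\sigma_1$ (in the usual sense, relative to $V(Q_1)'$) exactly when all its arrows to $V(Q_2)$ point into $V(Q_2)$, and \emph{red} exactly when they all point out of $V(Q_2)$, since both are governed by the sign of the single column of the $c$-matrix of $\widehat{Q_1}$ indexed by $k$. Two consequences follow. First, no arrow is ever created between $V(Q_1)\cup V(Q_1)'$ and $V(Q_2)'$: the only route would be a $2$-path through some $k\in V(Q_1)$ carrying both an arrow to $V(Q_2)$ and an arrow to or from $V(Q_2)'$, and the former would force $k$ green while the $V(Q_1)'$-arrow would force a status $k$ cannot simultaneously hold. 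Second, for the same reason no $2$-path through a mutated $k\in V(Q_1)$ creates an arrow inside $V(Q_2)$. Hence the induced subquiver on $V(Q_1)\sqcup V(Q_1)'$ evolves exactly as $\widehat{Q_1}$ under $\sigma_1$ and the induced subquiver on $V(Q_2)\sqcup V(Q_2)'$ stays exactly $\widehat{Q_2}$; at the end of Phase $1$ every vertex of $V(Q_1)$ is red, and by (b) with total transition matrix $-P$ all cross-arrows have reversed and now point from $V(Q_2)$ into $V(Q_1)$.

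In Phase $2$ we run $\sigma_2$ at the vertices of $V(Q_2)$ on the output of Phase $1$. By the same autonomy the induced subquiver on $V(Q_2)\sqcup V(Q_2)'$ evolves exactly as $\widehat{Q_2}$, so every vertex of $V(Q_2)$ ends red relative to $V(Q_2)'$, and since no arrow ever joins $V(Q_2)$ to $V(Q_1)'$ those vertices are red outright. It remains only to check that Phase $2$ does not un-redden a vertex of $V(Q_1)$, i.e.\ that no $2$-path through a mutated $k\in V(Q_2)$ produces an arrow from $V(Q_1)$ to $V(Q_2)'$ (arrows incident to $V(Q_1)'$ being untouched in Phase $2$ by the usual induction). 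Such a $2$-path would need $k$ to carry simultaneously an incoming arrow from $V(Q_1)$ and an outgoing arrow to $V(Q_2)'$; but by (b) the column of the $c$-matrix of $\widehat{Q_2}$ indexed by $k$ controls $k$'s arrows to $V(Q_2)'$ \emph{and}, applied to the one-directional $V(Q_1)$-block inherited from Phase $1$, $k$'s arrows to $V(Q_1)$, and its sign-coherence makes that pair of arrows impossible. Thus $\sigma$ is a reddening sequence for $Q$. The maximal green sequence statement is the same argument, since a maximal green sequence is a reddening sequence in which every mutation is at a green (equivalently, by sign-coherence, not-red) vertex: Phase $1$ mutates the vertices of $V(Q_1)$ exactly when $\sigma_1$ prescribes and with the same green/red status as in $\widehat{Q_1}$, and Phase $2$ mutates the vertices of $V(Q_2)$ at green vertices and never touches $V(Q_1)$.

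The step I expect to be the main obstacle is making fact (b), and the two extractions from sign-coherence that it feeds, fully precise: one must set up the linear transformation of coefficient rows by $E$-matrices along a fixed sequence, confirm that the relevant signs are exactly those of the $c$-matrix of the \emph{unextended} framed quiver $\widehat{Q_i}$ — this is where "no arrow to the other side's frozen vertices" keeps the $c$-vectors honestly sign-coherent — and then read off from the sign of a single column both "whenever $k\in V(Q_1)$ is mutated its cross-arrows are coherent'' (Phase $1$) and "whenever $k\in V(Q_2)$ is mutated it cannot both receive from $V(Q_1)$ and emit to $V(Q_2)'$'' (Phase $2$). A more conceptual alternative would be to argue through cluster scattering diagrams: the exchange matrix of a triangular extension is block triangular, so its scattering diagram decomposes relative to those of $Q_1$ and $Q_2$, and since reddening-ness of a quiver is equivalent to the existence of a generic path from the all-green to the all-red chamber, the property descends from the two factors to $Q$.
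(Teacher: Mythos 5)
The paper does not actually prove this lemma; it is imported verbatim from Cao and Li, so the only fair comparison is with their argument. Your outline reproduces that argument's architecture correctly: reduce to the case where all connecting arrows go from $V(Q_1)$ to $V(Q_2)$, run $\sigma_1$ first and $\sigma_2$ second (your order is the right one --- for $1\to 2$ the sequence $(1,2)$ works and $(2,1)$ does not), observe that mutation restricted to an induced subquiver containing the mutated vertex agrees with mutation in that subquiver so each block evolves autonomously, and use sign-coherence to show the connecting arrows never point both ways at a mutated vertex, so no arrows leak across the blocks or toward the wrong frozen copies.

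The genuine gap is your ``fact (b).'' The standard sign-coherence theorem of Derksen--Weyman--Zelevinsky and Gross--Hacking--Keel--Kontsevich applies to the principal-coefficient $C$-matrix of $\widehat{Q_i}$; it says nothing, by itself, about an arbitrary extra coefficient block such as the rows of the connecting block indexed by $V(Q_2)$. The linearized mutation rule for a coefficient row $i$ at vertex $k$ is $b'_{ij}=b_{ij}+b_{ik}\,[\varepsilon\, b_{kj}]_+$ with $\varepsilon=\operatorname{sgn}(b_{ik})$, so the ``transition matrix'' depends on the sign pattern of the row being transported, not only on the principal part and the $c$-vectors. The assertion that these signs coincide with the signs of the $c$-vectors of $\widehat{Q_1}$ (equivalently, that a connecting block which starts out one-directional stays column sign-coherent and slaved to the green/red status of each $k$ throughout the sequence) is exactly Cao and Li's ``uniform column sign-coherence'' lemma, which requires its own induction and is the real content of their proof. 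Every load-bearing step of your argument --- no arrow created inside $V(Q_2)$ in Phase 1, the total reversal of the cross-arrows at the end of Phase 1 via the transition matrix $-P$, and the impossibility in Phase 2 of $k$ simultaneously receiving from $V(Q_1)$ and emitting to $V(Q_2)'$ --- rests on this unproved claim. You flag it honestly as the main obstacle, but as written the proof assumes precisely the lemma it would need to establish; either prove the uniform sign-coherence statement for a nonnegative initial block, or cite it explicitly, and the rest of your argument goes through.
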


\begin{figure}
\begin{tikzpicture}
\node (1) at (0,0.5) {$1$};
\node (2) at (1,-0.5) {$2$};
\node (3) at (-1,-0.5) {$3$};
\draw[-{latex}] (1) to (2);
\draw[-{latex}] (2) to (3);
\draw[-{latex}] (3) to (1);

\node (4) at (3,1.5) {$4$};
\node (5) at (3,0) {$5$};
\node (6) at (3,-1.5) {$6$};
\draw[-{latex}] (4) to (5);
\draw[-{latex}] (6) to (5);

\draw[-{latex}] (1) to (4);
\draw[-{latex}] (1) to (5);
\draw[-{latex}] (2) to (4);
\draw[-{latex}] (2) to[bend right=30] (5);
\draw[-{latex}] (2) to (5);
\draw[-{latex}] (2) to (6);
\end{tikzpicture}
\caption{A triangular extension of quivers.}
\label{fig:directsum}
\end{figure}
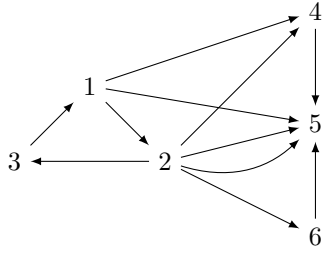

\begin{theorem}
Let $Q_\infty = \{Q_i\}$ be an infinite quiver. If the following criteria are met:

\begin{enumerate}
    \item $Q_i$ is a triangular extension of $Q_{i-1}$ and $Q_i \setminus Q_{i-1}$ for each $i\in \mathbb{N}$,
    \item $Q_i \setminus Q_{i-1}$ has a reddening sequence for each $i \in \mathbb{N}$, and
    \item $Q_1$ has a reddening sequence,
\end{enumerate}
 then $Q_\infty$ has a reddening sequence.
\end{theorem}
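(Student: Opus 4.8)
The plan is to construct a reddening sequence $\sigma$ for $Q_\infty$ as a ``limit'' of reddening sequences for the finite quivers $Q_i$, built up one layer at a time via Lemma~\ref{lem:directsum}. Write $R_1 := Q_1$ and $R_i := Q_i \setminus Q_{i-1}$ for $i \geq 2$. By hypotheses (2) and (3) each $R_i$ admits a reddening sequence; fix one and call it $\rho_i$ (a finite mutation sequence supported on $V(R_i)$, possibly empty if $R_i$ is). I will define, by induction on $i$, a reddening sequence $S_i$ for $Q_i$ such that $S_{i-1}$ occurs inside $S_i$ as an \emph{initial segment or a final segment}. Set $S_1 := \rho_1$. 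Given $S_{i-1}$, hypothesis (1) tells us $Q_i$ is a triangular extension of $Q_{i-1}$ and $R_i$: if the connecting arrows all point from $Q_{i-1}$ into $R_i$ set $S_i := S_{i-1}\,\rho_i$, and if they all point from $R_i$ into $Q_{i-1}$ set $S_i := \rho_i\,S_{i-1}$.

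The assertion that each $S_i$ is a reddening sequence for $Q_i$ is where Lemma~\ref{lem:directsum} is used, but in its \emph{explicit} form: the construction in Cao--Li's proof (and Remark 4.6) exhibits the reddening sequence of a triangular extension as the concatenation of reddening sequences of the two summands, ordered so that one reddens the ``source'' summand (the one the connecting arrows leave) first and the ``target'' summand second. One can sanity-check this on the $A_2$ quiver $1 \to 2$: reddening $\{1\}$ then $\{2\}$, i.e.\ the sequence $(1,2)$, is a reddening (indeed maximal green) sequence, while $(2,1)$ is not. I would cite this explicit statement rather than the bare existence statement, because the coherence of the family $\{S_i\}$---the fact that $S_{i-1}$ is an end segment of $S_i$---is exactly what the rest of the argument needs.

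Now let $\sigma := \bigcup_i S_i$, with the linear order in which each element retains its position inside every $S_i$ containing it; this is well defined precisely because each $S_{i-1}$ sits in $S_i$ as an initial or final segment. Two bookkeeping checks follow. First, $\sigma$ has order type $\mathbb{N}$, $-\mathbb{N}$, or $\mathbb{Z}$: the elements ever appended to the right of $S_1$ form an increasing chain of finite initial segments, likewise on the left, so $\sigma$ equals (left tail) $+\, S_1 \,+$ (right tail) with each tail finite or order-isomorphic to $\mathbb{N}$; and $\sigma$ is genuinely infinite because $Q_\infty$ has infinitely many vertices, so infinitely many $R_i$ are nonempty. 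Hence $\sigma$ is a legitimate infinite or bi-infinite mutation sequence. Second, $\sigma|_{V(Q_i)} = S_i$ for every $i$: since $V(Q_i) = V(R_1) \sqcup \cdots \sqcup V(R_i)$, the mutations in $\sigma$ at vertices of $Q_i$ are exactly those from $\rho_1, \dots, \rho_i$, and each later block $\rho_j$ ($j > i$) was attached entirely before or entirely after $S_i$, so it disappears under restriction without disturbing the internal order of $S_i$. Therefore $\sigma|_{V(Q_i)} = S_i$ is a reddening sequence for $Q_i$ for all $i \geq 1$, which is the definition of $\sigma$ being a reddening sequence for $Q_\infty$.

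The main obstacle is the one flagged above: Lemma~\ref{lem:directsum} as quoted only guarantees that \emph{some} reddening sequence exists for a triangular extension, whereas the induction requires the specific concatenated form (reddening of the source summand, then reddening of the target summand), so that the $S_i$ nest correctly and the restrictions $\sigma|_{V(Q_i)}$ come out as desired. The substantive step is thus to extract this explicit description from Cao--Li's argument; granting it, the order-type bookkeeping for $\sigma$ and the restriction computation are routine.
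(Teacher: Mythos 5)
Your proposal is correct and follows essentially the same route as the paper: decompose $Q_\infty$ into $Q_1$ and the layers $Q_i\setminus Q_{i-1}$, inductively concatenate a reddening sequence for each new layer onto the left or the right of the previous sequence according to the direction of the connecting arrows, and take the resulting infinite or bi-infinite sequence as the reddening sequence for $Q_\infty$. The one issue you flag---that Lemma~\ref{lem:directsum} as quoted gives only existence of a reddening sequence for the triangular extension, whereas the induction needs the explicit concatenated form $(\sigma_{i-1},\tau_i)$ or $(\tau_i,\sigma_{i-1})$---is a genuine point, but the paper's own proof invokes the lemma in exactly that explicit form without further justification, so your treatment (including the order-type and restriction bookkeeping) is if anything more careful than the source.
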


\begin{proof}
    Let $Q_\infty = \{ Q_i \}$ be an infinite quiver as in the theorem. Define $W_i := Q_i \setminus Q_{i-1}$. Also define $\tau_i$ as the reddening sequence for $W_i$ whose existence is criteria $(2)$. We will now construct the reddening sequence, $\sigma_\infty$, for $Q_\infty$ iteratively.\\

    Let $\sigma_1$ be the reddening sequence for $Q_1$ given by criteria $(1)$. Now we must define $\sigma_i$ in terms $\sigma_{i-1}$. For this there are two cases:\\
    
    \textbf{Case One:} $Q_i = Q_i \ \stackanchor{$\oplus$}{$\rightarrow$} \ W_i$.

    This is the case where all of the arrows connecting $Q_i$ to $W_i$ are outgoing. Then by Lemma \ref{lem:directsum} we have that $(\sigma_{i-1}, \tau_i)$ is a reddening sequence for $Q_i$.\\
   
   \textbf{Case One:} $Q_i = Q_i \ \stackanchor{$\oplus$}{$\leftarrow$} \ W_i$.

    This is the case where all of the arrows connecting $Q_i$ to $W_i$ are incoming. Then by Lemma \ref{lem:directsum} we have that $( \tau_i, \sigma_{i-1})$ is a reddening sequence for $Q_i$.\\
   
   The resulting bi-infinite sequence obtained by adding the $\tau_i$ to either the left or the right of the previous reddening sequence is a reddening sequence for $Q_\infty$. 
\end{proof}

Though this result seems restrictive, one can build $Q_\infty$ using many different sequences. In essence, what the result is saying is that if you can build the infinite quiver, $Q_\infty$, in such a way that all of the newly added parts of the quiver have reddening sequences of their own, and are triangular extensions of the previous quiver in the sequence then in fact you have a reddening sequence for the infinite quiver. For example this guarantees the infinite quiver in Figure \ref{triangles} a reddening sequence. 

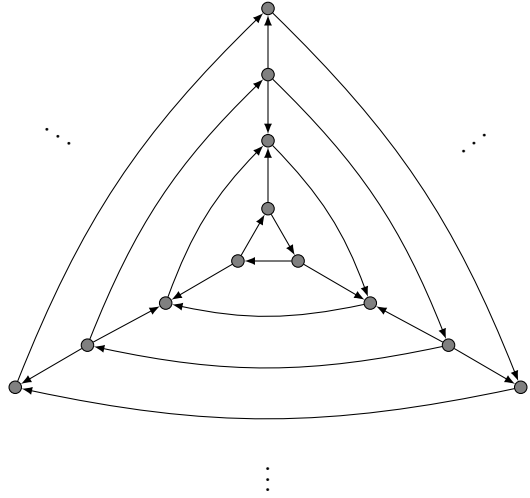
\begin{figure}
    \centering
    \begin{tikzpicture}[scale=0.8]
        \centering
        \node[draw,circle,fill=gray,scale=0.5] (1a) at (0,0.87) {};
        \node[draw,circle,fill=gray,scale=0.5] (1b) at (0.5,0) {};
        \node[draw,circle,fill=gray,scale=0.5] (1c) at (-0.5,0) {};
        
        \draw[-{latex}] (1a) to (1b);
        \draw[-{latex}] (1b) to (1c);
        \draw[-{latex}] (1c) to (1a);        
        
        \node[draw,circle,fill=gray,scale=0.5] (2a) at (0,2) {};
        \node[draw,circle,fill=gray,scale=0.5] (2b) at (1.7,-0.7) {};
        \node[draw,circle,fill=gray,scale=0.5] (2c) at (-1.7,-0.7) {};

        \draw[-{latex}, bend left = 12] (2a) to (2b);
        \draw[-{latex}, bend left = 12] (2b) to (2c);
        \draw[-{latex}, bend left = 12] (2c) to (2a);

        \node[draw,circle,fill=gray,scale=0.5] (3a) at (0,3.1) {};
        \node[draw,circle,fill=gray,scale=0.5] (3b) at (3,-1.4) {};
        \node[draw,circle,fill=gray,scale=0.5] (3c) at (-3,-1.4) {};

        \draw[-{latex}, bend left = 12] (3a) to (3b);
        \draw[-{latex}, bend left = 12] (3b) to (3c);
        \draw[-{latex}, bend left = 12] (3c) to (3a);

        \node[draw,circle,fill=gray,scale=0.5] (4a) at (0,4.2) {};
        \node[draw,circle,fill=gray,scale=0.5] (4b) at (4.2,-2.1) {};
        \node[draw,circle,fill=gray,scale=0.5] (4c) at (-4.2,-2.1) {};

        \draw[-{latex}, bend left = 12] (4a) to (4b);
        \draw[-{latex}, bend left = 12] (4b) to (4c);
        \draw[-{latex}, bend left = 12] (4c) to (4a);

        \draw[-{latex}] (1a) to (2a);
        \draw[-{latex}] (3a) to (2a);
        \draw[-{latex}] (3a) to (4a);
        \draw[-{latex}] (1b) to (2b);
        \draw[-{latex}] (3b) to (2b);
        \draw[-{latex}] (3b) to (4b);
        \draw[-{latex}] (1c) to (2c);
        \draw[-{latex}] (3c) to (2c);
        \draw[-{latex}] (3c) to (4c);
        
        \node (dot1) at (-3.5,2.2) {\rotatebox[origin=c]{5}{$\ddots$}};
        \node (dot2) at (3.3,2) {\rotatebox[origin=c]{75}{$\ddots$}};
        \node (dot3) at (0,-3.5) {$\vdots$};
    \end{tikzpicture}
    \caption{An infinite quiver $Q^\infty$.}
    \label{triangles}
\end{figure}

\section{Future Work}\label{sec:futurework}

There are many natural questions that arise from the definitions we have provided for infinite quivers and mutation on these mathematical objects. We present here a few of these problems: 

Consider the surface on the left of Figure \ref{fig:future}, a triangulated surface with one puncture and one limit arc. Under the topological framework provided by Çanaçki and Felikson \cite{C-F}, this type of surface is allowed. The quiver that we would naturally think arises from this surface is seen on the right of the same figure. However, under our definition of infinite quivers, there seems to be no way to build the sequence of finite quivers that results in the infinite quiver in Figure \ref{fig:future}. We tried to tweak our definition of an infinite quiver to be the sequence of finite quivers $\{Q_i\}$ such that $Q_i$ is a graph minor of $Q_{i+1}$. This would allow the infinite quiver to be the limit of a sequence of directed cycles of increasing order, however this presented some problems when we tried to find the correct sequence of embeddings $\{\tau_i\}$, so we did not change our definition.  

\begin{question}
    Is there a construction analogous to the sequence of induced subgraph definition of infinite quiver, that instead uses an embedded sequence of graph minors which gives the above example as a limit of an infinite sequence of finite quivers? 
\end{question}

\begin{figure}[h]
\centering
\begin{minipage}{0.45\textwidth}
\centering
\begin{tikzpicture}[scale=2.75]
  \draw[thick,black] (0,0) circle (1);
  \fill[cyan!20] (0,0) circle (1);

  \def\n{20}
  \def\shorta{17}
  \def\shortb{18}
  \def\dotted{16}

  \foreach \i in {0,...,19}{
    \pgfmathsetmacro{\a}{360/\n*\i}
    \ifnum\i=\shorta\else\ifnum\i=\shortb\else\ifnum\i=\dotted\else
      \draw[thick,blue](0,0)--({cos(\a)},{sin(\a)});
      \fill[black]({cos(\a)},{sin(\a)})circle(0.02);
    \fi\fi\fi
  }

  \pgfmathsetmacro{\aA}{360/\n*\shorta}
  \pgfmathsetmacro{\aB}{360/\n*\shortb}
  \pgfmathsetmacro{\aC}{360/\n*\dotted}

  \draw[thick,blue](0,0)--({0.8*cos(\aA)},{0.8*sin(\aA)});
  \draw[thick,blue](0,0)--({0.8*cos(\aB)},{0.8*sin(\aB)});

  \pgfmathsetmacro{\r}{0.82}
  \foreach \f in {0.25,0.5,0.75}{
    \pgfmathsetmacro{\ang}{\aA+\f*(\aB-\aA)}
    \fill[black]({\r*cos(\ang)},{\r*sin(\ang)})circle(0.01);
  }

  \draw[thick,blue,dashed](0,0)--({cos(\aC)},{sin(\aC)});
  \draw[thin,black]({cos(\aC)},{sin(\aC)})circle(0.02);

  \fill[black](0,0)circle(0.03);
\end{tikzpicture}
\end{minipage}
\hfill
\begin{minipage}{0.45\textwidth}
\centering
\begin{tikzpicture}[scale=2.75]
  \node[draw,circle,fill=gray,scale=0.5] (v0)  at (1,0) {};
  \node[draw,circle,fill=gray,scale=0.5] (v1)  at (0.866,0.5) {};
  \node[draw,circle,fill=gray,scale=0.5] (v2)  at (0.5,0.866) {};
  \node[draw,circle,fill=gray,scale=0.5] (v3)  at (0,1) {};
  \node[draw,circle,fill=gray,scale=0.5] (v4)  at (-0.5,0.866) {};
  \node[draw,circle,fill=gray,scale=0.5] (v5)  at (-0.866,0.5) {};
  \node[draw,circle,fill=gray,scale=0.5] (v6)  at (-1,0) {};
  \node[draw,circle,fill=gray,scale=0.5] (v7)  at (-0.866,-0.5) {};
  \node[draw,circle,fill=gray,scale=0.5] (v8)  at (-0.5,-0.866) {};
  \node[draw,circle,fill=gray,scale=0.5] (v9)  at (0,-1) {};
  \node[draw,circle,fill=gray,scale=0.5] (v10) at (0.5,-0.866) {};
  \node[draw=none,circle,fill=white,scale=2.25] (v11) at (0.866,-0.5) {};
  \node (dots) at (0.79,-0.485) {\rotatebox[origin=c]{94}{\textbf{$\ddots$}}};

  \draw[-{latex}] (v0) to (v1);
  \draw[-{latex}] (v1) to (v2);
  \draw[-{latex}] (v2) to (v3);
  \draw[-{latex}] (v3) to (v4);
  \draw[-{latex}] (v4) to (v5);
  \draw[-{latex}] (v5) to (v6);
  \draw[-{latex}] (v6) to (v7);
  \draw[-{latex}] (v7) to (v8);
  \draw[-{latex}] (v8) to (v9);
  \draw[-{latex}] (v9) to (v10);
  \draw[-{latex}] (v10) to (v11);
  \draw[-{latex}] (v11) to (v0);
\end{tikzpicture}
\end{minipage}

\caption{The left shows a surface with one puncture and one limit arc, and the right shows the infinite quiver that would naturally arise from such a surface.}
\label{fig:future}
\end{figure}
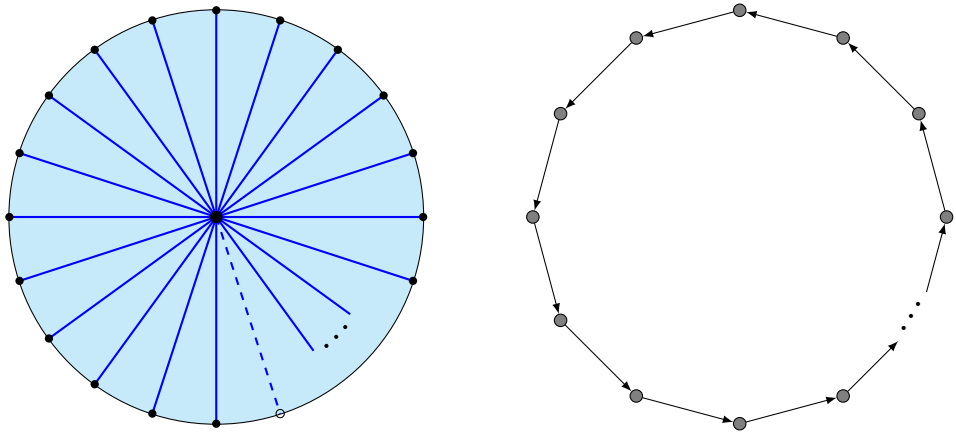

Another area of research that is left open is whether the reddening sequences explored in the previous section are invariant under mutation. In the finite case Muller \cite{Muller} utilizes scattering diagrams to show that if $Q$ is a finite quiver that admits a reddening sequence, so does any mutation equivalent quiver $Q'$. The hope would be that a similar result is true in the case of infinite quivers, but we cannot utilize the notion of scattering diagrams in this case. What one can show is that at every finite subquiver in the infinite quiver we have a reddening sequence, which will guarantee the existence of a reddening sequence for its mutation equivalent counterpart, but what we cannot show is whether that sequence of finite reddening sequences is in fact compatible as an infinite reddening sequence.

\begin{question}
    Can the existence of an infinite reddening sequence for $Q$ be used to build an infinite reddening sequence for a mutation equivalent $Q'$? If not can a counter example be found where two infinite quivers $Q$ and $Q'$ are mutation equivalent yet only one of them admits a reddening sequence. Or in other words, is the existence of an infinite reddening sequence for an infinite quiver invariant under mutation? 
\end{question}

The authors would like to acknowledge the thoughtful conversation and help from John Machacek on this paper, as it has greatly helped us develop these ideas.


\bibliographystyle{plain}
\bibliography{paper}

\end{document}